\newtheorem{theorem}{Theorem}[section]
\newtheorem{lemma}{Lemma}[section]
\newtheorem{remark}{Remark}[section]
\newcommand{\bal}{\begin{align}}
\newcommand{\bbal}{\begin{align*}}
\newcommand{\beq}{\begin{equation}}
\newcommand{\eeq}{\end{equation}}
\newcommand{\bca}{\begin{cases}}
\newcommand{\eca}{\end{cases}}
\newcommand{\pa}{\partial}
\newcommand{\fr}{\frac}
\newcommand{\de}{\delta}
\newcommand{\dd}{\mathrm{d}}
\newcommand{\R}{\mathbb{R}}
\newcommand{\les}{\lesssim}
\newcommand\f{\left}
\newcommand\g{\right}
\begin{document}
\title{Norm inflation and ill-posedness for the Fornberg-Whitham equation}

\author{
Jinlu Li
\footnote{School of Mathematics and Computer Sciences, Gannan Normal University, Ganzhou 341000, China
(E-mail: lijinlu@gnnu.edu.cn(J.L))},
Xing Wu
\footnote{College of Information and Management Science, Henan Agricultural University, Zhengzhou 450002, China
(E-mail: ny2008wx@163.com(X.W))},
Yanghai Yu
\footnote{School of Mathematics and Statistics, Anhui Normal University, Wuhu 241002, China
(E-mail: yuyanghai214@sina.com(Y.Y))} and
Weipeng Zhu\footnote{School of Mathematics and Big Data, Foshan University, Foshan, Guangdong 528000, China
(E-mail: mathzwp2010@163.com(W.Z))}}

\date{\today}

\maketitle\noindent{\hrulefill}

{\bf Abstract:} In this paper, we prove that the Cauchy problem for the Fornberg-Whitham equation is not locally well-posed in $B^s_{p,r}(\R)$  with $(s,p,r)\in (1,1+\frac1p)\times[2,\infty)\times [1,\infty]$ or $(s,p,r)\in \{1\}\times[2,\infty)\times [1,2]$ by showing norm inflation phenomena of the solution for some special initial data.

{\bf Keywords:} Fornberg-Whitham equation, Norm inflation, ill-posedness, Besov spaces

{\bf MSC (2010):} 35Q53, 35B35
\vskip0mm\noindent{\hrulefill}

\section{Introduction}\label{sec1}
\subsection{Model and Known Results}\label{subsec11}
\bibliographystyle{plain}
In this paper, we focus on the Cauchy problem of the Fornberg-Whitham (FW) equation
\begin{align}\label{FW}
\begin{cases}
u_t-u_{x x t}+\frac{3}{2} u u_x-\frac{9}{2} u_x u_{x x}-\frac{3}{2} u u_{x x x}-u_x=0, &\quad (t,x)\in \R^+\times\R,\\
u(0, x)=u_0(x),&\quad x\in \R,
\end{cases}
\end{align}
which was first introduced by Whitham \cite{Whitham} in 1967 and Whitham and Fornberg \cite{FW} in 1978, as a shallow water wave model to study the qualitative behaviors of wave breaking (the solution remains bounded while its slope
becomes unbounded in finite time).

The FW equation was compared with the famous Korteweg-de Vries (KdV) equation \cite{Korteweg}
\begin{eqnarray}\label{eq4}
          u_t+6uu_x=-u_{xxx},
\end{eqnarray}
and the classical Camassa-Holm (CH) equation \cite{Camassa1993,Constantin-E,Escher1,Escher2,Escher3,Escher4,Escher5,Fokas1981}
\begin{eqnarray}\label{ch}
          u_t-u_{xxt}+3uu_x=2u_xu_{xx}+uu_{xxx}.
                  \end{eqnarray}
The KdV equation admits solitons or solitary traveling wave solutions. Indeed, the KdV equation in the non-periodic admits the solitary wave solutions with the following form
$$u(t,x)=\frac{c}{2}{\rm{sech}}^2\Big(\frac{\sqrt{c}}{2}(x-ct)\Big).$$
And, the CH equation possess exact peaked soliton solutions (peakons) of the form $$u(t,x)=ce^{-|x-ct|}.$$
It is interesting that the FW equation does not only admit solitary traveling wave solutions like the KdV
equation, but also possess peakon solutions (or peaked traveling wave solutions) \cite{Whitham} as the CH equation which
are of the form
\bal\label{pek}
u(t,x)=\frac{8}{9}e^{-\frac{1}{2}|x-\frac{4}{3}t|}.\end{align}
The $\mathrm{FW}$ equation \eqref{FW} admits the conserved quantities \cite{Naumkin}
\bbal
&E_1(u)=\int_{\mathbb{R}} u \dd x,\quad
E_2(u)=\int_{\mathbb{R}} u^2 \dd x, \\
& E_3(u)=\int_{\mathbb{R}}\left(u\left(1-\partial_x^2\right)^{-1} u-u^3\right) \dd x.
\end{align*}

A classification of other traveling wave solutions of the FW equation was presented by Yin, Tian and Fan \cite{Yin}. It's worth mentioning that the KdV equation and CH equation are integrable,
and they possess infinitely many conserved quantities, an infinite hierarchy of quasi-local symmetries, a
Lax pair and a bi-Hamiltonian structure.  Unlike the Kdv and CH equation, the FW equation is not integrable. Although the FW equation is in a simple form, the only useful
conservation law we know so far is $\|u\|_{L^2}$. Therefore, the analysis of the FW equation would be somewhat
more difficult due to the special structure of this equation and the lower regularity of the conservation law. Particularly, the well-posedness theory for the FW equation is not completely understood.
 Before recalling the well-posedness results the FW equation, we firstly transform the FW equation \eqref{FW} equivalently into the following non-local form
\begin{align}\label{b}
\begin{cases}
\pa_tu+\frac32uu_x=\pa_x(1-\pa^2_x)^{-1}u, &\quad (t,x)\in \R^+\times\R,\\
u(0,x)=u_0(x), &\quad x\in \R.
\end{cases}
\end{align}

By the Galerkin approximation
argument, Holmes \cite{Holmes16} proved the well-posed of FW equation in Sobolev spaces $H^s(\mathbb{T})$ with $s > 3/2$. Holmes and Thompson \cite{Holmes17} obtained the well-posedness of FW equation in Besov spaces $B^s_{2,r}(\mathbb{R}\;\text{or}\; \mathbb{T})(s>3/2,1<r<\infty\;\text{or}\;s=3/2,r=1)$. They also proved that the data-to-solution map
is not uniformly continuous but H\"{o}lder continuous in some given topology and presented a blow-up criterion for solutions. Later, Haziot\cite{Haziot}, H\"{o}rmann\cite{G18-1,G3}, Wei\cite{Wei18,Wei21}, Wu-Zhang\cite{Wu} and Yang \cite{Yang} sharpened this
blowup criterion and presented the sufficient conditions about the initial data to lead the wave-breaking phenomena of
the FW equation. The discontinuous traveling waves as weak solutions to the FW equation were
investigated in \cite{G18-2}.
Recently, Guo\cite{Guo} established the local well-posedness (existence, uniqueness and continuous dependence) for the FW equation in both supercritical Besov spaces $B_{p, r}^s$ with $s>1+\frac{1}{p}$, $(p, r)\in [1,\infty]\times[1,\infty)$ and critical Besov spaces $B_{p, 1}^{1+1/p}$ with $p\in[1,\infty)$. We also should mention that the FW equation is ill-posed in $B_{2, \infty}^{3/2}$ by the peakon solution \eqref{pek} and in $B^s_{p,\infty}$ $(s>1+1/{p}, p\in[1,\infty])$ by proving the solution map starting from $u_0$ is discontinuous at $t = 0$ in the metric of $B^s_{p,\infty}$ (see our recent papers \cite{Li22,Li22-2}). For the CH equation, Guo-Liu-Molinet-Yin \cite{Guo-Yin} showed the ill-posedness in the critical Besov spaces $B_{p,r}^{1+1/p}(\mathbb{R}\;\text{or}\; \mathbb{T})$ with $(p,r)\in[1,\infty]\times(1,\infty]$ (especially in $H^{3/2}$) due to norm inflation. Following their methods, the FW equation is also ill-posed in the above critical Besov spaces.
However, the well-posedness issue of the FW equation in Besov spaces $B_{p, r}^s$ with $s<1+\frac{1}{p}$ appears unsolved.
In this paper, our aim is to provide a rigorous proof of ill-posedness for the FW equation in the low regularity setting by exhibiting a strong instability known as a {\it Norm Inflation}, i.e. arbitrary small data leads to an arbitrarily large
solution in arbitrary small time, preventing the solution map to be continuous.

\subsection{Main Result}\label{subsec12}
Now let us state our main ill-posedness result of this paper.
\begin{theorem}\label{th}
Assume that
\bal\label{condition}
1<s<1+\frac1p,(p,r)\in [2,\infty)\times [1,\infty]  \quad \mathrm{or} \quad s=1, (p,r)\in[2,\infty)\times[1,2].
\end{align}
For any $\de>0$, there exists initial data satisfying
\bbal
\|u_0\|_{B^s_{p,r}}\leq \de,
\end{align*}
such that a solution $u(t)\in \mathcal{C}([0,T_0];B^s_{p,r})$ of the Cauchy problem \eqref{b} satisfies
\bbal
\|u(T_0)\|_{B^s_{p,r}}\geq \frac1\de \quad \mathrm{for} \ \mathrm{some} \quad 0<T_0<\delta.
\end{align*}
\end{theorem}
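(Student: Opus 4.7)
I would prove the theorem by exhibiting, for each $\delta>0$, explicit smooth initial data $u_0$ with $\|u_0\|_{B^s_{p,r}}\le\delta$ whose solution at some $T_0<\delta$ has $B^s_{p,r}$ norm at least $1/\delta$. Fix Schwartz bumps $\phi,\chi$ on $\mathbb R$ with $\widehat\phi$ supported in a small neighborhood of the origin. For a large dyadic integer $N$ to be chosen depending on $\delta$, set
\[
u_0(x)=\delta\,\phi(x)+\delta\,N^{-s}\chi(x)\cos(Nx).
\]
The first summand is spectrally concentrated at low frequencies, the second at $|\xi|\sim N$; a direct Littlewood--Paley check gives $\|u_0\|_{B^s_{p,r}}\lesssim\delta$. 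Since $u_0\in\mathcal S(\mathbb R)$, Guo's local well-posedness theorem (cited in the introduction) supplies a unique classical solution $u\in C([0,T^*);H^\sigma)$ for every $\sigma$.

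\textbf{Main term.} Setting $\Phi(v):=-\tfrac32 vv_x+\partial_x(1-\partial_x^2)^{-1}v$ so that $u_t=\Phi(u)$, I would write the Duhamel expansion
\[
u(T_0)=u_0+T_0\,\Phi(u_0)+R(T_0),\qquad R(T_0):=\int_0^{T_0}\bigl[\Phi(u(\tau))-\Phi(u_0)\bigr]\,d\tau,
\]
and analyze $\Phi(u_0)$ term by term. Decomposing $u_0u_0'$ by paraproducts, the dominant contribution at frequency $N$ is the low-high cross term $(\delta\phi)\cdot\partial_x(\delta N^{-s}\chi\cos(N\cdot))\approx -\delta^2 N^{1-s}\phi\chi\sin(N\cdot)$, which is spectrally localized in the Littlewood--Paley band at level $n$ with $2^n\sim N$ and hence has $B^s_{p,r}$ norm of order $\delta^2 N^{1-s}\cdot N^s=\delta^2 N$; the linear term $\partial_x(1-\partial_x^2)^{-1}u_0$ is one derivative smoother and $O(\delta)$, hence negligible. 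I would then choose $T_0\sim(\delta^3 N)^{-1}$, making $T_0<\delta$ as soon as $N\gtrsim\delta^{-4}$ and yielding $\|T_0\,\Phi(u_0)\|_{B^s_{p,r}}\gtrsim 1/\delta$.

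\textbf{Remainder --- the main obstacle.} The hard part is showing $\|R(T_0)\|_{B^s_{p,r}}\ll\|T_0\,\Phi(u_0)\|_{B^s_{p,r}}$. A naive Gronwall based on Besov product estimates loses one derivative through the quadratic term $uu_x$ and controls $R$ only by a quantity of order $T_0^2\cdot(\delta N)^2$, which at $T_0\sim(\delta^3 N)^{-1}$ exceeds the main term. My plan to overcome this is to split $u=u^L+u^H$ into low- and high-frequency components, to show that $u^H$ is approximately transported along the flow of the low-frequency velocity $u^L$ so that its own $B^s_{p,r}$ norm is preserved to the precision needed, and to isolate the genuine source of inflation in the off-diagonal low-high paraproduct using Littlewood--Paley-localized transport and commutator estimates. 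The hypothesis $p\ge 2$ would enter through Bernstein-type $L^p$ bounds on spectrally localized pieces and stationary-phase estimates for the phase-modulated wave packet, while the borderline case $s=1$ with $r\le 2$ corresponds exactly to the regime in which the $\ell^r$ summation across dyadic bands in the residual estimate saturates precisely at $r=2$.
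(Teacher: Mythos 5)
Your proposal takes a genuinely different route from the paper, but it has a conceptual gap that I do not think can be repaired along the lines you sketch. The term you identify as the source of inflation --- the low--high paraproduct $(\delta\phi)\,\partial_x\bigl(\delta N^{-s}\chi\cos(N\cdot)\bigr)$ --- is precisely the transport of the high-frequency packet by the low-frequency velocity. When the Duhamel series is resummed, this term does not produce growth: it produces (to leading order) the composition of $u^H$ with the Lipschitz flow generated by $u^L$, and the $B^s_{p,r}$ norm of a wave packet composed with a bi-Lipschitz diffeomorphism stays comparable to its initial size $O(\delta)$. You in fact say this yourself in the remainder discussion (``$u^H$ is approximately transported \ldots so that its own $B^s_{p,r}$ norm is preserved''), but that observation contradicts the claimed lower bound $\|T_0\,\Phi(u_0)\|_{B^s_{p,r}}\gtrsim 1/\delta$ surviving into $u(T_0)$: the remainder $R(T_0)$ is not small relative to the first iterate --- it cancels it to leading order. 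The apparent growth $T_0\delta^2 N$ is an artifact of truncating the expansion, which is exactly why this wave-packet ansatz is the standard tool for proving \emph{non-uniform dependence} (where the output is a phase/position shift, not norm growth) rather than norm inflation. Beyond this, the remainder control is only announced as a ``plan,'' and the stated roles of $p\ge 2$ and of $r\le 2$ at $s=1$ are not connected to any actual estimate.

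For contrast, the paper's mechanism is entirely different and genuinely quasilinear: it takes antisymmetric peakon--antipeakon data $u_0(x)=p_0(e^{-|x+q_0|}-e^{-|x-q_0|})$, derives a Riccati inequality $\frac{\dd}{\dd t}u_x\le -\frac32 u_x^2+6p_0q_0$ along the characteristics $\psi(t,x)$ for $x\in(-q_0,q_0)$, pins the lifespan between $T_{\mathrm{min}}$ and $T_{\mathrm{max}}\approx p_0^{-1}$, and then runs an energy identity for $A(t)=\int_{\psi(t,-q_0)}^{\psi(t,q_0)}u_x^2\,\dd x$ to show $A(T_{\mathrm{min}})\gtrsim p_0^2$, whence $\|u(T_{\mathrm{min}})\|_{W^{1,p}}\gtrsim p_0$ while $\|u_0\|_{B^s_{p,r}}\lesssim p_0 q_0^{3/2-\sigma}$ can be made small. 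The inflation there comes from the focusing quadratic term $-\frac32 u_x^2$ concentrating $u_x$ on a shrinking interval --- an effect invisible at first order in the Duhamel expansion. If you want to salvage a perturbative approach, you would need to identify a resonant interaction that survives the resummation of the transport part, which for this equation is not supplied by your ansatz.
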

\begin{remark}\label{re1}
Theorem \ref{th} indicates the ill-posedness of \eqref{b} in $B^s_{p,r}(\R)$ in the sense that the solution map $B^s_{p,r}\ni u_0 \mapsto u\in B^s_{p,r}$ is discontinuous with respect to the initial data.
\end{remark}

\begin{remark}\label{re2}
To the best of our knowledge, Theorem \ref{th} is new for the FW equation even in the Sobolev spaces $H^s(\R)=B^{s}_{2,2}(\R)$ with $s\in[1,\fr32)$.
\end{remark}
\subsection{Strategy to Proof}\label{subsec13}
We shall outline the main ideas in the proof of Theorem \ref{th}.
\begin{itemize}
  \item Firstly, we construct an explicit example for initial data $u_0$, where the norm $\|u_0\|_{B^s_{p,r}}$ is sufficiently small.
  \item Secondly, we present the lower and upper bounds of lifespan $T^*$ of the solution to the FW equation \eqref{b} by exploring fully the properties of the flow map $\psi$.
  \item Lastly, we mainly observe that the transport term does cause growth of the $L^2$-norm of $u_x$. Precisely speaking, we estimate the $L^2$-norm of $u_x$ over $(-\psi(T_{\rm{min}},q_0),\psi(T_{\rm{min}},q_0))$ and obtain that its lower bound can be arbitrarily large.
\end{itemize}
{\bf The structure of the paper.}\quad
In Section \ref{sec2} we provide several key Lemmas. In Section \ref{sec3} we present the proof of Theorem \ref{th}.
\section{Preliminary}\label{sec2}
\subsection{Notations and Norms}\label{subsec21}
For the reader's convenience, we collect the notations and functional spaces that will be used frequently throughout this paper.
\begin{itemize}
  \item We use the letter $C$ to denote various absolute constant whose
value can change from a line to another or even within a single line. $C(s,t)$ stands for some positive constant dependent of the variables $s$ and $t$.
  \item The symbol $A\approx B$ means that $C^{-1}B\leq A\leq CB$.
  \item Given a Banach space $X$, we denote its norm by $\|\cdot\|_{X}$.
  \item We denote by $W^{1,p}$ the standard Sobolev space on $\R$ of $L^p$ functions whose derivative also belongs to $L^p$.
\item We will also define the Lipschitz space $\mathrm{Lip}$ using the norm $\|f\|_{\mathrm{Lip}}=\|f\|_{L^\infty}+\|\pa_xf\|_{L^\infty}$.
\item For $I\subset\R$, we denote by $\mathcal{C}(I;X)$ the set of continuous functions on $I$ with values in $X$. Sometimes we will denote $L^p(0,T;X)$ by $L_T^p(X)$.
    \item Let $s\in\mathbb{R}$ and $(p,r)\in[1, \infty]^2$. The nonhomogeneous Besov space $B^{s}_{p,r}(\R)$ is defined by (see \cite{B} for more details).
\begin{align*}
B^{s}_{p,r}(\R):=\f\{f\in \mathcal{S}'(\R):\;\|f\|_{B^{s}_{p,r}(\mathbb{R})}:=\f\|2^{j s}\| \Delta_j u\|_{L^p}\g\|_{\ell^r(j\geq-1)}<\infty\g\}.
\end{align*}
\end{itemize}

\subsection{Key Example for Initial Data}
Firstly, we construct an explicit example as follows.

Let $p_0\gg1$ and $0<q_0\ll1$ which will be fixed later. Set
\bal\label{ini}
u_0(x)=p_0\big(e^{-|x+q_0|}-e^{-|x-q_0|}\big).
\end{align}
That is, the initial profile $u_0(x)$ is the anti-symmetric peakon-antipeakon (see Fig.1).
\vskip0.1in
\hskip1in
\begin{minipage}{0.7\linewidth}
\hspace*{0cm}
\vspace*{0cm}
\begin{tikzpicture}[xscale=1,yscale=1]
%
%
\newcommand\X{7};
\newcommand\Y{2};
\newcommand\FX{11};
\newcommand\FY{11};
\newcommand\Z{0.6};
\newcommand\C{2};
\newcommand\A{1};
%
%
\draw[->,line width=1pt,black] (-5,0)--(5,0)
node[above left] {\fontsize{\FX}{\FY}$x$};
\draw[->,line width=1pt,black] (0,-2.5)--(0,2.5) node[below right] {\fontsize{\FX}{\FY}$u_0$};
\draw[domain=-4:4, variable=\x,
red, line width=1.5pt]
plot ({\x},{\C*exp(-abs(\x+\A))-\C*exp(-abs(\x-\A))});
\draw[line width=1pt,black,dashed]
({-(\A)},{\C-\C*exp(-abs(2*\A)))})
node[] { }
node[above,xshift=-1.0cm] {\fontsize{\FX}{\FY}
$-u_0(q_0)=u_0(-q_0)$}
--
({-(\A)},0)
node[below, xshift=-.2cm] {\fontsize{\FX}{\FY}$-q_0$}
node[] {$\bullet$} ;
\draw[line width=1pt,black,dashed]
({(\A)},{\C*exp(-abs(2*\A))-\C})
node[] {}
node[below,xshift=1.2cm] {\fontsize{\FX}{\FY}
$u_0(q_0)=p_0(e^{-2q_0}-1)$}
--
({(\A)},0)
node[above, xshift=.2cm] {\fontsize{\FX}{\FY}$q_0$} node[] {$\bullet$};
\end{tikzpicture}
\end{minipage}
\vskip0.15in
\centerline{Fig.1: Initial profile $u_0(x)$.}
\vskip0.1in
By the simple fact which will be also used in the sequel $$\fr{x}{x+1}\leq 1-e^{-x}\leq x,\quad \forall x>-1,$$ one has
$$\|u_0\|_{L^\infty}\leq 2p_0q_0\quad\text{and}\quad \|u_0\|_{L^2}\leq 4p_0q_0.$$
It is easy to check that $u_0(x)$ is an odd function and
\bal\label{ini-2}
u'_0(x)=
\begin{cases}
p_0(e^{q_0}-e^{-q_0})e^{x}, &\;\text{if}\; x\in(-\infty,-q_0),\\
-p_0e^{-q_0}(e^x+e^{-x}), &\;\text{if}\; x\in(-q_0,q_0),\\
p_0(e^{q_0}-e^{-q_0})e^{-x}, &\;\text{if}\; x\in(q_0,+\infty),
\end{cases}
\end{align}
which is displayed in Figure 2.
\vskip0.1in
\hskip1in
\begin{minipage}{0.7\linewidth}
\hspace*{0cm}
\vspace*{0cm}
\begin{tikzpicture}[xscale=1,yscale=1]
%
%
\newcommand\X{7};
\newcommand\Y{2};
\newcommand\FX{11};
\newcommand\FY{11};
\newcommand\Z{0.6};
\newcommand\C{2};
\newcommand\A{1};
%
%
\draw[->,line width=1pt,black] (-5,0)--(5,0)
node[above left] {\fontsize{\FX}{\FY}$x$};
\draw[->,line width=1pt,black] (0,-2.5)--(0,2.5) node[below left] {\fontsize{\FX}{\FY}$u'_0$};
\draw[domain=-4:-1, variable=\x,
red, line width=1.5pt]
plot ({\x},{\C*exp(\x+\A)-\C*exp(\x-\A)});
\draw[domain=-1:1, variable=\x,
red, line width=1.5pt]
plot ({\x},{-\C*exp(\x-\A)-\C*exp(-\x-\A)});
\draw[domain=1:4, variable=\x,
red, line width=1.5pt]
plot ({\x},{\C*exp(-\x+\A)-\C*exp(-\x-\A)});
\draw[domain=-1:1, variable=\x,
black,dashed, line width=1.5pt]
plot ({\x},{-\C*exp(-2*\A)-\C});
\draw[domain=-1:1, variable=\x,
black,dashed, line width=1.5pt]
plot ({\x},{\C-\C*exp(-2*\A)});
\draw[line width=1pt,black,dashed]
({(\A)},{-\C*exp(-2*\A)-\C})
node[] {}
node[below,xshift=1.2cm] {\fontsize{\FX}{\FY}
$u'_0(q_0^-)=-p_0(e^{-2q_0}+1)$}
--
({(\A)},0)
node[below, xshift=.4cm] {\fontsize{\FX}{\FY}$q_0$} node[] {$\bullet$};
\draw[line width=1pt,black,dashed]
({-(\A)},{\C-\C*exp(-2*\A)})
node[] { }
node[above,xshift=-1.0cm] {\fontsize{\FX}{\FY}}
--
({-(\A)},0)
node[below, xshift=-.4cm] {\fontsize{\FX}{\FY}$-q_0$}
node[] {$\bullet$} ;
\draw[line width=1pt,black,dashed]
({-(\A)},{-\C*exp(-2*\A)-\C})
node[] { }
node[above,xshift=-1.0cm] {\fontsize{\FX}{\FY}}
--
({-(\A)},0)
node[below, xshift=-.4cm] {\fontsize{\FX}{\FY}$-q_0$}
node[] {$\bullet$} ;
\draw[line width=1pt,black,dashed]
({(\A)},{\C-\C*exp(-2*\A)})
node[] {}
node[above,xshift=1.2cm] {\fontsize{\FX}{\FY}
$u_0(q_0^+)=p_0(1-e^{-2q_0})$}
--
({(\A)},0)
node[below, xshift=.4cm] {\fontsize{\FX}{\FY}$q_0$} node[] {$\bullet$};
\end{tikzpicture}
\end{minipage}
\vskip0.15in
\centerline{Fig.2: Graph of $u'_0(x)$.}
\vskip0.1in

Furthermore, we can deduce that the following result holds:
\begin{lemma}\label{le01} For every $q_0\in(0,1)$ and $s\in(\fr12,\fr32)$, there exists $C=C_s>0$ such that
\begin{align*}
C^{-1} p_0q_0^{3/2-s}\leq\|u_0\|_{H^s}\leq C p_0q_0^{3/2-s}.
\end{align*}
\end{lemma}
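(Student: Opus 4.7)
The approach is to evaluate $\|u_0\|_{H^s}$ directly via Plancherel's theorem. Since $\widehat{e^{-|\cdot|}}(\xi)=\fr{2}{1+\xi^2}$, the anti-symmetric translated structure of $u_0$ yields $\widehat{u_0}(\xi)=\fr{4ip_0\sin(q_0\xi)}{1+\xi^2}$, and therefore, up to constants depending only on $s$,
\bbal
\|u_0\|_{H^s}^2 \approx p_0^2\int_{\R}\fr{\sin^2(q_0\xi)}{(1+\xi^2)^{2-s}}\,\dd\xi.
\end{align*}

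Next I would rescale by $\eta=q_0\xi$ to isolate the power of $q_0$, obtaining
\bbal
\|u_0\|_{H^s}^2 \approx p_0^2\,q_0^{3-2s}\,I(q_0), \qquad I(q_0):=\int_{\R}\fr{\sin^2\eta}{(q_0^2+\eta^2)^{2-s}}\,\dd\eta,
\end{align*}
so the lemma reduces to proving $I(q_0)\approx 1$ uniformly in $q_0\in(0,1)$. For the upper bound I would split at $|\eta|=1$: on the outer region, use $\sin^2\eta\leq 1$ and $(q_0^2+\eta^2)^{2-s}\geq|\eta|^{4-2s}$, whose tail is integrable exactly because $s<3/2$ makes $4-2s>1$; on the inner region, use $\sin^2\eta\leq\eta^2$ and a further substitution $t=\eta/q_0$ to reduce the piece to $q_0^{2s-1}\int_0^{1/q_0}\fr{t^2}{(1+t^2)^{2-s}}\,\dd t$, where the hypothesis $s>1/2$ gives $\int_1^{1/q_0}t^{2s-2}\,\dd t \approx q_0^{1-2s}$, cancelling the prefactor. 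For the lower bound, restricting $I(q_0)$ to $\eta\in[1,2]$ yields $(q_0^2+\eta^2)^{2-s}\leq 5^{2-s}$ for every $q_0\in(0,1)$, hence $I(q_0)\geq 5^{s-2}\int_1^2\sin^2\eta\,\dd\eta=C_s>0$.

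The only real technical point, and the main obstacle, is the book-keeping of the cancellation in the inner region: the two endpoint conditions $s>1/2$ (ensuring integrability near the origin after rescaling) and $s<3/2$ (ensuring integrability of the tail) together are precisely what force $I(q_0)$ to be simultaneously finite and bounded away from zero uniformly in $q_0\in(0,1)$. Without both of these, the predicted scaling exponent $3/2-s$ produced by the change of variables would no longer capture the genuine size of $\|u_0\|_{H^s}$, and the two-sided estimate would fail.
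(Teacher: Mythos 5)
Your argument is correct: the Fourier transform of the peakon--antipeakon is computed correctly, the rescaling $\eta=q_0\xi$ isolates the factor $q_0^{3-2s}$, and the splitting at $|\eta|=1$ together with the hypotheses $s<3/2$ (tail integrability) and $s>1/2$ (cancellation of the $q_0^{2s-1}$ prefactor in the inner piece) does give $I(q_0)\approx 1$ uniformly, with the $[1,2]$ restriction handling the lower bound. This is essentially the same Plancherel-plus-frequency-splitting argument as in the proofs of Byers and Himonas--Holliman--Grayshan to which the paper defers, so your proposal matches the paper's (outsourced) proof.
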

\begin{proof} The proof essentially follows that of Lemma 3.1 in \cite{Byers} or Proposition 1 in \cite{Himonas}. \end{proof}

\subsection{Blow-up Criterion}
\begin{lemma}\label{le2} Assume that $(s,p,r)$ satisfies \eqref{condition}, there exists a maximal time $T^*$ and a solution $u\in \mathcal{C}([0,T^*);B^s_{p,r})\cap L^\infty([0,T^*);\mathrm{Lip})$ to the FW equation \eqref{b} with initial data $u_0$ given by \eqref{ini}. Moreover, if the maximal time $T^*$ is finite, we have
\bbal
\lim_{t\uparrow T^*}\big(\|u(t)\|_{B^s_{p,r}}+\|u(t)\|_{\mathrm{Lip}}\big)=+\infty\quad\Leftrightarrow\quad\lim_{t\uparrow T^*}\|u(t)\|_{\mathrm{Lip}}
=+\infty.
\end{align*}
\end{lemma}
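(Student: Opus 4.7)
The plan is to combine a Friedrichs mollification of the initial datum with a Besov transport estimate and a Riccati bound on $\|\partial_x u\|_{L^\infty}$ to construct the solution on a common lifespan, and then to read off the blow-up criterion from the same a priori estimate.

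First, I would set $u_0^n:=J_n u_0$ with $J_n$ a standard mollifier, so that $u_0^n\in H^\infty$ with $\|u_0^n\|_{\mathrm{Lip}}\leq\|u_0\|_{\mathrm{Lip}}$. Guo's theory \cite{Guo} in any supercritical space $B^{s'}_{p,1}$ with $s'>1+1/p$ then yields a unique smooth maximal solution $u^n\in\mathcal{C}([0,T^*_n);H^\infty)$. I would then derive two a priori estimates. Applying $\Delta_j$ to \eqref{b}, testing against $|\Delta_j u^n|^{p-2}\Delta_j u^n$, and using the classical commutator estimate together with the fact that $\partial_x(1-\partial_x^2)^{-1}$ is a Fourier multiplier of order $-1$ gives
$$\frac{d}{dt}\|u^n\|_{B^s_{p,r}}\leq C\bigl(1+\|\partial_x u^n\|_{L^\infty}\bigr)\|u^n\|_{B^s_{p,r}}.$$
For the Lipschitz control, differentiating \eqref{b} in $x$ produces the pointwise identity $\partial_t\partial_x u+\tfrac{3}{2}u\partial_x^2 u+\tfrac{3}{2}(\partial_x u)^2=(1-\partial_x^2)^{-1}u-u$, and evaluating along the characteristics of $\tfrac{3}{2}u^n$ yields the Riccati inequality
$$\frac{d}{dt}\|\partial_x u^n\|_{L^\infty}\leq \tfrac{3}{2}\|\partial_x u^n\|_{L^\infty}^2+C\|u^n\|_{L^\infty},$$
while the $L^1$-bounded kernel of $\partial_x(1-\partial_x^2)^{-1}$ gives $\|u^n(t)\|_{L^\infty}\leq e^{Ct}\|u_0\|_{L^\infty}$. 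Solving the resulting coupled system produces a common time $T=T(\|u_0\|_{\mathrm{Lip}})>0$ on which $\|u^n\|_{\mathrm{Lip}}$ stays uniformly bounded, and then Gronwall applied to the first display bounds $\|u^n\|_{B^s_{p,r}}$ uniformly on $[0,T]$.

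The equation \eqref{b} then provides equicontinuity of $\{u^n\}$ in $\mathcal{C}([0,T];L^p)$, which combined with the uniform Besov bound and interpolation yields, via Arzela-Ascoli, a subsequence converging in $\mathcal{C}([0,T];B^{s'}_{p,r})$ for every $s'<s$ to a limit $u\in L^\infty_T(B^s_{p,r}\cap\mathrm{Lip})$. Continuity in $B^s_{p,r}$ itself follows from the transport estimate applied to $u$, and uniqueness is a standard $L^2$-energy estimate on the difference of two solutions using that the drift $\tfrac{3}{2}u$ is Lipschitz. Iterating the local construction delivers the maximal lifespan $T^*$. The blow-up criterion is then immediate: the $(\Leftarrow)$ implication is trivial because $\|u\|_{\mathrm{Lip}}$ is a summand of the left-hand side, and for $(\Rightarrow)$ the Gronwall form
$$\|u(t)\|_{B^s_{p,r}}\leq \|u_0\|_{B^s_{p,r}}\exp\!\Bigl(C\int_0^t\bigl(1+\|\partial_x u(s)\|_{L^\infty}\bigr)\,ds\Bigr)$$
shows that if $\|u\|_{\mathrm{Lip}}$ stayed bounded on $[0,T^*)$, so would $\|u\|_{B^s_{p,r}}$, contradicting the blow-up of the sum.

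The main obstacle is the low regularity: the conditions \eqref{condition} lie strictly below the well-posedness threshold $s>1+1/p$ of \cite{Guo}, so the construction cannot invoke Guo's theory directly at the target Besov level and must instead extract a low-regularity solution from smooth approximants. The whole scheme therefore hinges on the fact that the Lipschitz norm of $u^n$ can be controlled by a closed Riccati-type ODE rather than requiring a higher Besov estimate, which is precisely what lets us close the a priori bounds at the regularity of \eqref{condition}.
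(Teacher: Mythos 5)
Your proposal is correct and follows essentially the same route as the paper: the paper's proof simply embeds the specific datum into $B^s_{p,r}\cap\mathrm{Lip}$ and cites Danchin's Lemma 2.4 for exactly the two a priori estimates (Besov transport bound driven by $\|\partial_x u\|_{L^\infty}$, plus a closed Lipschitz bound) that you derive explicitly via mollification, uniform estimates on a common lifespan, and compactness. Your version merely fills in the construction that the paper delegates to \cite{d1}, and the blow-up criterion is read off from the same Gronwall inequality in both cases.
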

\begin{proof}
It follows from \eqref{ini}-\eqref{ini-2} and Lemma \ref{le01} that $u_0\in H^\sigma\cap \mathrm{Lip}\subseteq B^s_{p,r}\cap \mathrm{Lip}$ with $\sigma\in (s-\frac1p+\frac12,\fr32)$.

Following the proof of Lemma 2.4 in \cite{d1}, we can get
\begin{align*}
&\|u(t)\|_{B^s_{p,r}} \leq\left\|u_{0}\right\|_{B^s_{p,r}} \exp\left(C \int_{0}^{t}1+\|u(\tau)\|_{\mathrm{Lip}} \dd \tau\right)
\end{align*}
and
\begin{align*}
&\|u(t)\|_{\mathrm{Lip}} \leq\left\|u_{0}\right\|_{\operatorname{Lip}} \exp\left(C \int_{0}^{t}1+\|u(\tau)\|_{\mathrm{Lip}} \dd \tau\right).
\end{align*}
This is enough to complete the proof of Lemma \ref{le2}.\end{proof}
\subsection{The Equation Along the Flow}
Given a Lipschitz velocity field $u$, we may solve the following ODE to find the flow induced by $u$:
\begin{align}\label{ode}
\quad\begin{cases}
\frac{\dd}{\dd t}\psi(t,x)=\frac32u(t,\psi(t,x)),\\
\psi(0,x)=x,
\end{cases}
\end{align}
which is equivalent to the integral form
\bal\label{psi}
\psi(t,x)=x+\frac32\int^t_0u(\tau,\psi(\tau,x))\dd \tau.
\end{align}
Because the velocity field is Lipschitz, then we get that for $t\in[0,T^*)$
\bbal
\pa_x\psi(t,x)=\exp\left(\frac32\int^t_0\pa_x u(\tau,\psi(\tau,x))\dd \tau\right)>0.
\end{align*}
This shows that $\psi(t,\cdot)$ is an increasing diffeomorphism over $\R$, that is,  for all $x,y\in \R,$ there holds that $\psi(t,x)<\psi(t,y)$ if $x< y$.
Also, we have
\bal\label{psi-xt}
\|\pa_x\psi(t,x)\|_{L^\infty}\leq e^{CT^*\|\pa_x u\|_{L^\infty_t(L^\infty)}} \quad\text{and}\quad \|\pa_t\psi(t,x)\|_{L^\infty}\leq C\|u\|_{L^\infty_{T^*}(L^\infty)}.
\end{align}
In view of \eqref{b}, we get that
\bbal
\frac{\dd}{\dd t}u(t,\psi(t,x))&=u_{t}(t,\psi(t,x))+u_{x}(t,\psi(t,x))\frac{\dd}{\dd t}\psi(t,x)
\\&=\left(u_t+\frac{3}{2} u u_x\right)(t, \psi(t, x))\\&=\pa_x(1-\pa^2_x)^{-1}u(t,\psi(t,x)).
\end{align*}
Integrating the above with respect to time variable yields that
\bal\label{v}
u(t,\psi(t,x))=u_0(x)+\int^t_0\pa_x(1-\pa^2_x)^{-1}u(\tau,\psi(\tau,x))\dd \tau.
\end{align}
Differentiating \eqref{b} with respect to space variable $x$, we find
\bal\label{u1}
u_{tx}+\frac32uu_{xx}+\frac32(u_x)^2=\pa^2_x(1-\pa^2_x)^{-1}u=:V.
\end{align}
Combining \eqref{ode} and \eqref{u1}, we obtain
\bal\label{du1}
\frac{\dd}{\dd t}u_x(t,\psi(t,x))&=u_{tx}(t,\psi(t,x))+u_{xx}(t,\psi(t,x))\frac{\dd}{\dd t}\psi(t,x)\nonumber\\
&=\left(u_{tx}+\frac32uu_{xx}\right)(t,\psi(t,x))\nonumber\\
&=-\frac32(u_x)^2(t,\psi(t,x))+V(t,\psi(t,x)),
\end{align}
which means that
\bal\label{ux1}
u_x(t,\psi(t,x))=u'_0(x)-\fr32\int^t_0(u_x)^2(\tau,\psi(\tau,x))\dd \tau+\int^t_0V(\tau,\psi(\tau,x))\dd \tau.
\end{align}
Differentiating \eqref{u1} with respect to space variable $x$, we find
\bal\label{u2}
u_{txx}+\frac32uu_{xxx}+\frac92u_xu_{xx}=\pa^3_x(1-\pa^2_x)^{-1}u,
\end{align}
which gives us that
\bal\label{du2}
&\frac{\dd}{\dd t}u_{xx}(t,\psi(t,x))
=-\frac92u_xu_{xx}(t,\psi(t,x))+\pa^3_x(1-\pa^2_x)^{-1}u(t,\psi(t,x))
\end{align}
and
\bal\label{du3}
u_{xx}(t,\psi(t,x))=u''_0(x)-\fr92\int^t_0u_xu_{xx}(\tau,\psi(\tau,x))\dd \tau+\int^t_0\pa^3_x(1-\pa^2_x)^{-1}u(\tau,\psi(\tau,x))\dd \tau.
\end{align}
\subsection{Estimation of Lifespan}
We need two preliminary Lemmas.
\begin{lemma}\label{le0} Assume that $(s,p,r)$ satisfies \eqref{condition}. Let $u\in \mathcal{C}([0,T^*);B^s_{p,r})\cap L^\infty([0,T^*);\mathrm{Lip})$ be the solution of the FW equation \eqref{b} with initial data $u_0$ given by \eqref{ini}, then we have
\bbal
u_{x}(t,\psi(t,x))\in \mathcal{C}^1([0,T^*)\times(-q_0,q_0)).
\end{align*}
\end{lemma}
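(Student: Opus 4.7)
My approach is to treat $F(t,x):=u_x(t,\psi(t,x))$ as the unknown in the Volterra-type integral equation \eqref{ux1}, viewed along trajectories whose initial position $x$ lies in the interval $(-q_0,q_0)$, where $u_0$ is smooth by \eqref{ini-2}. The regularity of $F$ will follow from the smoothness of the datum $u_0'(x)$ on $(-q_0,q_0)$ together with continuity of the forcing $V\circ\psi$, where $V=\pa_x^2(1-\pa_x^2)^{-1}u=(1-\pa_x^2)^{-1}u-u$.

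First I would observe that, since $u\in L^\infty([0,T^*);\mathrm{Lip})$ and $u$ solves \eqref{b}, the identity $u_t=-\tfrac{3}{2}uu_x+\pa_x(1-\pa_x^2)^{-1}u$ is in $L^\infty$, so $u$ is jointly Lipschitz in $(t,x)$. Hence $V=(1-\pa_x^2)^{-1}u-u$ is continuous in $(t,x)$, and by \eqref{psi} the flow $\psi$ is continuous in $(t,x)$, so $V\circ\psi$ is continuous on $[0,T^*)\times\R$. Since \eqref{ini-2} gives $u_0'(x)=-p_0 e^{-q_0}(e^x+e^{-x})$ on $(-q_0,q_0)$, which is $C^\infty$ there, Picard iteration (or a Gronwall argument using the boundedness of $F$ by $\|u_x\|_{L^\infty}$) applied to the Riccati ODE
\bbal
\frac{\dd}{\dd t}F(t,x)=-\fr{3}{2}F^2(t,x)+V(t,\psi(t,x)),\qquad F(0,x)=u_0'(x),
\end{align*}
yields continuity of $F$ on $[0,T^*)\times(-q_0,q_0)$. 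The time derivative $\pa_t F$ is then continuous directly from this ODE.

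For differentiability in $x$, I would formally differentiate \eqref{ux1} in $x$ to obtain the linear Volterra equation
\bbal
\pa_x F(t,x)=u_0''(x)-3\int_0^t F(\tau,x)\pa_x F(\tau,x)\,\dd\tau+\int_0^t V_x(\tau,\psi(\tau,x))\,\pa_x\psi(\tau,x)\,\dd\tau.
\end{align*}
Here $u_0''(x)=-p_0 e^{-q_0}(e^x-e^{-x})$ is continuous on $(-q_0,q_0)$; the factor $\pa_x\psi(\tau,x)=\exp\!\big(\tfrac{3}{2}\int_0^\tau F(\sigma,x)\,\dd\sigma\big)$ is continuous by the previous step; and the composition $V_x\circ\psi$ is continuous because the decomposition $V_x=\pa_x(1-\pa_x^2)^{-1}u-u_x$ gives $V_x\circ\psi=[\pa_x(1-\pa_x^2)^{-1}u]\circ\psi-F$, each factor continuous. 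A standard Gronwall argument on this linear equation then produces a continuous $\pa_x F$, completing the $C^1$ claim.

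The main delicate point will be rigorously justifying $\pa_x[V(\tau,\psi(\tau,x))]=V_x(\tau,\psi(\tau,x))\pa_x\psi(\tau,x)$, since $V$ is not classically $C^1$ in its spatial variable (the piece $-u$ is only Lipschitz). The fix is to split $V=(1-\pa_x^2)^{-1}u-u$: the convolution piece $(1-\pa_x^2)^{-1}u$ is classically $C^1$ in $y$ with continuous derivative $\pa_y(1-\pa_y^2)^{-1}u$, so the chain rule applies directly; for the $-u$ piece, one uses \eqref{v} (whose forcing $\pa_x(1-\pa_x^2)^{-1}u$ is smooth in the spatial slot) to differentiate $u(\tau,\psi(\tau,x))$ under the integral sign, obtaining $\pa_x[u\circ\psi]=F\cdot\pa_x\psi$ in the classical sense on $(-q_0,q_0)$. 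Assembling the two pieces yields the displayed Volterra equation for $\pa_x F$ rigorously.
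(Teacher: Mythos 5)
Your proposal is correct and follows essentially the same route as the paper: establish continuity of $V\circ\psi$, run a Gronwall/Volterra argument on \eqref{ux1} to get continuity of $u_x\circ\psi$ (and hence of its time derivative via \eqref{du1}), then repeat at one order higher for the spatial derivative. The only cosmetic difference is that you work with $\pa_x\big(u_x(t,\psi(t,x))\big)$ directly, whereas the paper uses the equation \eqref{du3} for $u_{xx}\circ\psi$; these differ only by the continuous Jacobian factor $\pa_x\psi$, and your explicit handling of the chain rule for $V\circ\psi$ via the splitting $V=(1-\pa_x^2)^{-1}u-u$ is a point the paper leaves implicit.
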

\begin{proof} From \eqref{psi-xt}, it is not difficult to check that for any $\tau\in[0,t]$ and $x,y\in (-q_0,q_0)$
\bbal
|V(\tau,\psi(\tau,x))-V(\tau,\psi(\tau,y))|&\leq C\|\pa_xu\|_{L^\infty_t(L^\infty)}|\psi(\tau,x)-\psi(\tau,y)|
\\&\leq C|x-y|\cdot\|\pa_xu\|_{L^\infty_t(L^\infty)} e^{CT^*\|\pa_x u\|_{L^\infty_t(L^\infty)}}
\end{align*}
and
\bbal
|V(t,\psi(t,x))-V(s,\psi(s,x))|&\leq C|t-s|\cdot(\|\pa_tu\|_{L^\infty_t(L^\infty)}+\|\pa_xu\|_{L^\infty_t(L^\infty)}\|\pa_t\psi\|_{L^\infty_t(L^\infty)})
\\&\leq C(T^*,\|u\|_{L^\infty_{t}(\mathrm{Lip})}) |t-s|,
\end{align*}
which mean that $V(t,\psi(t,x)) \in \mathcal{C}([0,T^*)\times(-q_0,q_0))$.

Due to \eqref{ux1}, one has for any $x,y\in (-q_0,q_0)$
\bbal
X(t)&:=|u_x(t,\psi(t,x))-u_x(t,\psi(t,y))|\\
&\leq\left|u'_0(x)-u'_0(y)\right|+\int^t_0\left|V(\tau,\psi(\tau,x))-V(\tau,\psi(\tau,y))\right|\dd \tau\\
&\quad+\frac32\int^t_0\left|X(\tau)(u_x(\tau,\psi(\tau,x))+u_x(\tau,\psi(\tau,y))\right|\dd \tau\\
&\leq\left|u'_0(x)-u'_0(y)\right|+CT^*|x-y|\cdot \|\pa_xu\|_{L^\infty_t(L^\infty)} e^{CT^*\|\pa_x u\|_{L^\infty_t(L^\infty)}}\\
&\quad+3\int^t_0\left|X(\tau)\right|\|u_x\|_{L^\infty}\dd \tau,
\end{align*}
from which, using Gronwall's inequality, we have
\bal\label{lwyz}
|u_x(t,\psi(t,x))-u_x(t,\psi(t,y))|
&\leq C_t\left(\left|u'_0(x)-u'_0(y)\right|+|x-y|\right),
\end{align}
where $C_t$ depends on $T^*$ and $\|\pa_x u\|_{L^\infty_{t}(L^\infty)}$.

Similarly, one has for any $s,t\in[0,T^*)$ and $x\in (-q_0,q_0)$
\bbal
|u_x(t,\psi(t,x))-u_x(s,\psi(s,x))|\leq C\f(\|u\|_{L^\infty_{\max\{t,s\}}(L^\infty)}+\|\pa_x u\|^2_{L^\infty_{\max\{t,s\}}(L^\infty)}\g)|t-s|.
\end{align*}
Noticing that $u'_0(x)\in \mathcal{C}(-q_0,q_0)$, we deduce
\bbal
u_x(t,\psi(t,x))\in \mathcal{C}([0,T^*)\times(-q_0,q_0)).
\end{align*}
It is easy from \eqref{du3} to obtain that $u_{xx}(\tau,x)\in L^\infty_{t}L^\infty$ for $\tau\in[0,t]$.
Repeating the above procedure, we get
\bbal
u_{xx}(t,\psi(t,x))\in \mathcal{C}([0,T^*)\times(-q_0,q_0)),
\end{align*}
which in turn follows from \eqref{du1} that
\bbal
\frac{\dd}{\dd t}u_{x}(t,\psi(t,x))\in \mathcal{C}([0,T^*)\times(-q_0,q_0)).
\end{align*}
This completes the proof of Lemma \ref{le0}.
\end{proof}

We should emphasize that $u_x(t,x)$ is discontinuous in $[0,T^*)\times \R$, but we can claim that $u_x(t,x)$ is continuous in $[0,T^*)\times (\psi(t,-q_0),\psi(t,q_0))$. Furthermore, we have

\begin{lemma}\label{le1} Assume that $(s,p,r)$ satisfies \eqref{condition}. Let $u\in \mathcal{C}([0,T^*);B^s_{p,r})\cap L^\infty([0,T^*);\mathrm{Lip})$ be the solution of the FW equation \eqref{b} with initial data $u_0$ given by \eqref{ini}, then we have
\bbal
\pa_tu_x(t,x)\in \mathcal{C}([0,T^*)\times(\psi(t,-q_0),\psi(t,q_0))).
\end{align*}
\end{lemma}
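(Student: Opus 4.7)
The strategy is to bootstrap from Lemma \ref{le0}, which supplies $u_x(t,\psi(t,x))\in\mathcal{C}^1([0,T^*)\times(-q_0,q_0))$ together with $u_{xx}(t,\psi(t,x))\in\mathcal{C}([0,T^*)\times(-q_0,q_0))$, and then transfer this Lagrangian information back to the Eulerian frame via the inverse flow. The intuition is that $u_{tx}$ is continuous along characteristics, and since $\psi(t,\cdot)$ is a genuine change of coordinates, this continuity persists in the Eulerian variable as long as we stay inside the image of $(-q_0,q_0)$ under $\psi(t,\cdot)$.

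First I would rewrite the chain-rule computation \eqref{du1} in the form
\[
u_{tx}(t,\psi(t,x))=\frac{\dd}{\dd t}u_x(t,\psi(t,x))-\frac{3}{2}\,u(t,\psi(t,x))\,u_{xx}(t,\psi(t,x)).
\]
By Lemma \ref{le0} the first term on the right is continuous on $[0,T^*)\times(-q_0,q_0)$, and so is $u_{xx}(t,\psi(t,x))$. For the factor $u(t,\psi(t,x))$, note that $u\in L^\infty_{T^*}(\mathrm{Lip})$ gives uniform Lipschitz regularity in $x$, while \eqref{b} shows $\pa_t u\in L^\infty$, so $u$ is jointly continuous on $[0,T^*)\times\R$; composing with the continuous map $\psi$ from \eqref{psi-xt} preserves this. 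Hence $u_{tx}(t,\psi(t,x))\in\mathcal{C}([0,T^*)\times(-q_0,q_0))$.

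Next I would invert the flow. For each fixed $t\in[0,T^*)$ the map $\psi(t,\cdot)$ is a $\mathcal{C}^1$ increasing diffeomorphism of $\R$ with $\pa_x\psi>0$, so it restricts to a homeomorphism $(-q_0,q_0)\to(\psi(t,-q_0),\psi(t,q_0))$; denote its inverse by $\phi(t,\cdot)$. A short sequential-compactness argument, assuming a sequence $(t_n,y_n)$ converges in the open set
\[
\Omega:=\{(t,y):\,0\le t<T^*,\ \psi(t,-q_0)<y<\psi(t,q_0)\}
\]
but $x_n:=\phi(t_n,y_n)$ fails to converge to $\phi(t,y)$, forces a convergent subsequence in the compact interval $[-q_0,q_0]$ whose limit contradicts the injectivity of $\psi(t,\cdot)$; hence $\phi$ is jointly continuous on $\Omega$. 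For $(t,y)\in\Omega$ the identity $\pa_t u_x(t,y)=u_{tx}(t,\psi(t,\phi(t,y)))$ then realises $\pa_t u_x$ as a composition of continuous maps, which is precisely the claim.

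The main technical nuisance, rather than a genuine obstacle, is bookkeeping: one must verify that the pointwise partial derivative $\pa_t u_x(t,y)$ really coincides on $\Omega$ with the function $u_{tx}$ obtained by differentiating \eqref{b} in $x$. This follows from the $\mathcal{C}^1$-regularity of $u_x\circ\psi$ furnished by Lemma \ref{le0} together with the continuity of $\phi$ established above, but one must be careful to restrict to the flow-image of $(-q_0,q_0)$, which is exactly the set $\Omega$ appearing in the statement, because outside this set the derivative $u_x$ may have jump singularities inherited from those of $u_0'$ at $\pm q_0$.
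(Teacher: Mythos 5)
Your proposal is correct and follows essentially the same route as the paper: both arguments transfer the Lagrangian continuity from Lemma \ref{le0} back to Eulerian variables via the joint continuity of the inverse flow $\psi^{-1}$ (which the paper gets from the Lipschitz bounds \eqref{lwyz1} rather than your sequential-compactness argument), and then read off $\pa_t u_x$ from the equation. The only cosmetic difference is the order of operations — the paper first establishes Eulerian continuity of $u_x$ and $u_{xx}$ separately and then invokes $\pa_tu_x=-\frac32(u_x)^2-\frac32uu_{xx}+\pa^2_x(1-\pa^2_x)^{-1}u$, whereas you assemble $u_{tx}\circ\psi$ in the Lagrangian frame from \eqref{du1} and compose with $\psi^{-1}$ once at the end.
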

\begin{proof}
Let $z\in (\psi(t,-q_0),\psi(t,q_0))$. Due to \eqref{psi}, we have
\bbal
\psi^{-1}(t,z)=z-\frac32\int^t_0u(\tau,\psi(\tau,\psi^{-1}(t,z)))\dd \tau.
\end{align*}
Also, we have for $t\in[0,T^*)$
\bal\label{lwyz1}
&\|\pa_z\psi^{-1}(t,z)\|_{L^\infty}\leq C(T^*,\|\pa_x u\|_{L^\infty_{t}(L^\infty)}),\\
&\|\pa_t\psi^{-1}(t,z)\|_{L^\infty}\leq C(T^*,\|u\|_{L^\infty_{t}(\mathrm{Lip})}). \nonumber
\end{align}
From which and Lemma \ref{le0}, we obtain that  for $s,t\in[0,T^*)$ and $x,y\in (\psi(t,-q_0),\psi(t,q_0))$
\bbal
|u_x(t,x)-u_x(s,y)|&\leq|u_x(t,x)-u_x(t,y)|+|u_x(t,y)-u_x(s,y)|\\
&\leq|u_x(t,\psi(t,\psi^{-1}(t,x)))-u_x(t,\psi(t,\psi^{-1}(t,y)))|\\
&~~~~+|u_x(t,\psi(t,\psi^{-1}(t,y)))-u_x(s,\psi(s,\psi^{-1}(s,y)))|\\
&\to0\quad\text{as}\quad (t,x)\to (s,y),
\end{align*}
which tells us that
\bbal
u_{x}(t,x)\in \mathcal{C}([0,T^*)\times(\psi(t,-q_0),\psi(t,q_0))).
\end{align*}
It is easy from \eqref{du2} and \eqref{du3} to obtain that $u_{xxx}(\tau,x),u_{xxt}(\tau,x)\in L^\infty_{t}L^\infty$ for $\tau\in[0,t]$.
Repeating the above procedure, we get
\bbal
u_{xx}(t,x)\in \mathcal{C}([0,T^*)\times(\psi(t,-q_0),\psi(t,q_0))).
\end{align*}
Notice that $\pa_tu_x=-\frac32(u_x)^2-\frac32uu_{xx}+\pa^2_x(1-\pa^2_x)^{-1}u$, we can deduce that
\bbal
\pa_tu_x(t,x)\in \mathcal{C}([0,T^*)\times(\psi(t,-q_0),\psi(t,q_0))).
\end{align*}
This completes the proof of Lemma \ref{le1}.
\end{proof}

Next, we present the lower and upper bounds of lifespan $T^*$ of the solution to the FW equation \eqref{b}.

The $L^{\infty}$-norm of any function $u$ is preserved under the flow $\psi$, i.e.
$$
\|u(t, x)\|_{L^{\infty}(\R)}=\|u(t, \psi(t, x))\|_{L^{\infty}(\R)}.
$$
Combing the above and \eqref{v}, we obtain that for $t\in[0,T^*)\cap[0,\ln3]$
$$\|u(t)\|_{L^\infty(\R)}\leq \|u_0\|_{L^\infty(\R)}+\int^t_0\|u(\tau)\|_{L^\infty(\R)}\dd \tau.$$
Using Gronwall's inequality gives us that for $t\in[0,T^*)\cap[0,\ln3]$
$$\|u(t)\|_{L^\infty(\R)}\leq \|u_0\|_{L^\infty(\R)} e^{t}\leq 3p_0q_0.$$
Note that $\left(1-\partial_x^2\right)^{-1} f=G * f$ with $G(x)= \frac{1}{2} e^{-|x|}$, by Young's inequality, we have
$$
\|\partial_x(1-\partial_x^2)^{-1} u\|_{L^{\infty}}=\left\|G_x * u\right\|_{L^{\infty}} \leq\left\|G_x\right\|_{L^1}\|u\|_{L^\infty}\leq\|u\|_{L^\infty}.
$$
In view of the identity $\pa^2_x(1-\pa^2_x)^{-1}u=-u+(1-\pa^2_x)^{-1}u$, we have
\bbal
|V(t,\psi(t,x))|\leq \|V(t,x)\|_{L^{\infty}}&\leq \|u\|_{L^\infty}+\|(1-\partial_x^2)^{-1} u\|_{L^\infty}\leq 2\|u\|_{L^\infty}\leq 6p_0 q_0.
\end{align*}
Then for all $t\in[0,T^*)\cap[0,\ln3]$ and $x\in\R$, we deduce from \eqref{du1} that
\begin{align}\label{t1}
\begin{cases}
\frac{\dd}{\dd t}u_x(t,\psi(t,x))\leq -\frac32(u_x)^2(t,\psi(t,x))+6p_0 q_0,\\
\frac{\dd}{\dd t}u_x(t,\psi(t,x))\geq -\frac32(u_x)^2(t,\psi(t,x))-6p_0 q_0.
\end{cases}
\end{align}
From Fig 2, we see that $u'_0(x)\leq u'_0(0)=-2p_0e^{-q_0}< -p_0$ if $x\in(-q_0,q_0)$. Thus the standard argument of continuity shows
\bbal
u_x(t,\psi(t,x))\leq -p_0, \quad \forall (t,x)\in[0,T^*)\cap[0,\ln3]\times (-q_0,q_0),
\end{align*}
from which and \eqref{t1}, we have for $(t,x)\in [0,T^*)\cap[0,\ln3]\times (-q_0,q_0)$
\begin{align}\label{t2}
\begin{cases}
\frac{\dd}{\dd t}u_x(t,\psi(t,x))\leq -\frac32\left(u_x(t,\psi(t,x))+p_0 q_0\right)^2,\\
\frac{\dd}{\dd t}u_x(t,\psi(t,x))\geq -\frac32\left(u_x(t,\psi(t,x))-p_0 q_0\right)^2.
\end{cases}
\end{align}
Solving the differential inequalities \eqref{t2} yields for $(t,x)\in [0,T^*)\cap[0,\ln3]\times (-q_0,q_0)$
\bal\label{zw}
\frac{1}{\frac32t+\frac{1}{u'_0(x)-p_0 q_0}}+p_0 q_0\leq u_x(t,\psi(t,x))\leq \frac{1}{\frac32t+\frac{1}{u'_0(x)+p_0 q_0}}-p_0 q_0.
\end{align}
According to the definition of $u_0$, we can deduce that the lifespan $T^*$ of the solution to \eqref{b} satisfies
\bbal
T^* &\leq \frac23\inf_{x\in(-q_0,q_0)}\left\{\frac{-1}{u'_0(x)+p_0 q_0}\right\}\\
& =-\frac{2}{3}\frac{1}{u'_0(q^-_0)+p_0 q_0}\\
&=\frac{2}{3}\frac{1}{p_0(1+e^{-2q_0}- q_0)}=:T_{\mathrm{max}},
\end{align*}
where we have used that $u'_0(q_0^-)=-p_0(e^{-2q_0}+1)$.

It is easy to deduce  that for sufficiently small $q_0$ $$T_{\mathrm{max}}\in \left(\fr1{3p_0},\fr2{3p_0}\right).$$

  Now, we claim that
 \bal\label{cla}
 |u_x(t,\psi(t,x))|\leq 6p_0 q_0,\quad \forall (t,x)\in [0,T^*)\times (-\infty, -q_0)\cap(q_0,+\infty),
 \end{align}
 which tells us that the solution do not blow up when $(t,x)\in [0,T^*)\times (-\infty, -q_0)\cap(q_0,+\infty)$.

Next, we begin to prove \eqref{cla}.
Due to $u'_0(x)\in(0,2p_0q_0)$ if $x\in(-\infty, -q_0)$,
from $\eqref{t1}_1$,
we have
$$u_x(t,\psi(t,x))\leq u'_0(x)+6p_0 q_0T^*\leq6p_0 q_0,\quad \forall (t,x)\in [0,T^*)\times (-\infty, -q_0).$$
Using the standard argument of continuity again, it follows from $\eqref{t1}_2$ that
$$u_x(t,\psi(t,x))\geq -6p_0 q_0,\quad \forall (t,x)\in [0,T^*)\times (-\infty, -q_0).$$
In summary, it holds that
$$|u_x(t,\psi(t,x))|\leq 6p_0 q_0,\quad \forall (t,x)\in [0,T^*)\times (-\infty, -q_0).$$
Following the above argument, we also have
$$|u_x(t,\psi(t,x))|\leq 6p_0 q_0,\quad \forall (t,x)\in t,x)\in [0,T^*)\times (q_0,+\infty),$$
thus the claim holds.

Then, we can deduce that
\bbal
T^* &\geq \frac23\inf_{x\in(-q_0,q_0)}\left\{\frac{-1}{u'_0(x)-p_0 q_0}\right\}\\
&= -\frac{2}{3}\frac{1}{u'_0(q^-_0)-p_0 q_0}\\
&=\frac{2}{3}\frac{1}{p_0(1+e^{-2q_0}+q_0)}=:T_{\mathrm{min}}.
\end{align*}

\section{Proof of Theorem \ref{th}}\label{sec3}
In this section, we prove Theorem \ref{th}.

According to the definition of $u'_0(x)$ (see Fig 2), we deduce from \eqref{zw} that
\bal\label{yh}
m(t)+p_0 q_0\leq u_x(t,\psi(t,x))\leq M(t)-p_0 q_0,
\end{align}
where, for notational convenience, we denote
\bal\label{zw1}
m(t):=\frac{1}{\frac32t+\frac{1}{u'_0(q_0^-)-p_0 q_0}}\quad\text{and}\quad M(t):=\frac{1}{\frac32t+\frac{1}{u'_0(0)+p_0 q_0}}.
\end{align}

Set $w(t,x):=u_x(t,x)$, then we obtain from \eqref{b}
\bbal
\pa_tw+\frac32\pa_x(uw)=V,
\end{align*}
which implies that
\bal\label{y}
\pa_t(w^2)+\frac32\pa_x(uw^2)+\frac32\pa_xuw^2=2Vw.
\end{align}
Integrating \eqref{y} with respect to space variable $x$ over $[\psi(t,-q_0),\psi(t,q_0)]$, we have
\bal\label{y1}
\int_{\psi(t,-q_0)}^{\psi(t,q_0)}\pa_t(w^2)\dd x+\frac32\int_{\psi(t,-q_0)}^{\psi(t,q_0)}\pa_x(uw^2)\dd x+\frac32\int_{\psi(t,-q_0)}^{\psi(t,q_0)}\pa_xuw^2\dd x=2\int_{\psi(t,-q_0)}^{\psi(t,q_0)}wV\dd x.
\end{align}
Direct computations yield that
\bal\label{y2}
\int_{\psi(t,-q_0)}^{\psi(t,q_0)}\pa_t(w^2)\dd x&=\frac{\dd}{\dd t}\int_{\psi(t,-q_0)}^{\psi(t,q_0)}w^2\dd x-\frac32u(t,\psi(t,q_0))w^2(t,\psi^-(t,q_0)) \nonumber
\\&\quad +\frac32u(t,\psi(t,-q_0))w^2(t,\psi^+(t,-q_0))
\end{align}
and
\bal\label{y3}
\int_{\psi(t,-q_0)}^{\psi(t,q_0)}\pa_x(uw^2)\dd x=u(t,\psi(t,q_0))w^2(t,\psi^-(t,q_0))-u(t,\psi(t,-q_0))w^2(t,\psi^+(t,-q_0)).
\end{align}
{\bf Note}\; We should mention that the both $w(t,\psi^-(t,q_0))$ and $w(t,\psi^+(t,-q_0))$ exist since $w(t,x)$ is uniformly continuous with respect to $x$ over $(\psi(t,-q_0),\psi(t,q_0))$ by \eqref{lwyz} and \eqref{lwyz1}.

Inserting \eqref{y2} and \eqref{y3} into \eqref{y1} yields
\bal\label{y4}
\frac{\dd}{\dd t}\int_{\psi(t,-q_0)}^{\psi(t,q_0)}w^2\dd x+\frac32\int_{\psi(t,-q_0)}^{\psi(t,q_0)}\pa_xuw^2\dd x=2\int_{\psi(t,-q_0)}^{\psi(t,q_0)}wV\dd x.
\end{align}
To simplify notation let $$A(t):=\int_{\psi(t,-q_0)}^{\psi(t,q_0)}w^2(t,x)\dd x,$$
combining \eqref{yh}, then \eqref{y4} reduces to
\bbal
A'(t)&=-\frac{3}{2}\int_{\psi(t,-q_0)}^{\psi(t,q_0)}u_x(t,x)w^2\dd x+2\int_{\psi(t,-q_0)}^{\psi(t,q_0)}wV\dd x\\
&\geq -\frac{3}{2}M(t)A(t)+\frac{3}{2}p_0 q_0A(t)-2\|u_0\|_{L^2}A^{\frac12}(t)\\
&\geq \left(-\frac{3}{2}M(t)-1\right)A(t)-C p_0^2 q_0^2,
\end{align*}
where we have used that
$$\|V\|_{L^2} \leq\|u\|_{L^2} =\|u_0\|_{L^2} \leq 4 p_0 q_0.$$
Solving the above differential inequality gives us that
\bal\label{A}
A(t)\geq \exp(B(t))\left(A_0-Cp_0^2 q_0^2\int^t_0 \exp(-B(\tau))\dd\tau\right),
\end{align}
where $$B(t)=\int^t_0 \left(-\frac{3}{2}M(t)-1\right)\dd\tau.$$
 Easy computations give that
\bbal
&\exp(B(t))=\frac{M(t)}{M(0)}e^{-t},\\
&A_0=\int_{|x|\leq q_0}\big(u'_0(x)\big)^2\dd x\approx  p^2_0q_0,\\
&M(0)=u'_0(0)+p_0 q_0=p_0 q_0-2p_0e^{-q_0},
\end{align*}
thus we have
\bbal
M(T_{\mathrm{min}})&=\frac{1}{\frac32T_{\mathrm{min}}+\frac{1}{u'_0(0)+p_0 q_0}}
\\&=\frac{1}{\frac{1}{p_0 q_0-u'_0(q_0^-)}+\frac{1}{u'_0(0)+p_0 q_0}}\\
&=\frac{p_0 q_0-u'_0(q_0^-)}{u'_0(0)+2p_0 q_0-u'_0(q_0^-)}M(0)\\
&\approx \frac{M(0)}{q_0},
\end{align*}
which gives directly that
\bbal
\frac{M(T_{\mathrm{min}})}{M(0)}
&\approx \frac{1}{q_0}.
\end{align*}
Also,
\bbal\int^{T_{\mathrm{min}}}_0 \exp(-B(\tau))\dd\tau&=\int^{T_{\mathrm{min}}}_0 \left(\frac32\tau(u'_0(0)+p_0 q_0)+1\right)e^{\tau}\dd\tau\\
&\les p_0 q_0T_{\mathrm{min}}^2+T_{\mathrm{min}}\\
&\les \frac{1}{p_0}.
\end{align*}
From \eqref{A}, we have
\bbal
A(T_{\mathrm{min}})\geq C\left(p^2_0-p_0q_0\right) \geq C p^2_0.
\end{align*}
Notice that \eqref{yh} again, we have
\bbal
\left(\int_{\psi(T_{\mathrm{min}},-q_0)}^{\psi(T_{\mathrm{min}},q_0)}w^p(T_{\mathrm{min}},x)\dd x\right)^{\frac1p}&\geq A^{\frac1{p}}(T_{\mathrm{min}})\left(p_0 q_0-M(T_{\mathrm{min}})\right)^{1-\frac2{p}}\\
&\geq cp_0^{\frac2{p}}\left(p_0 q_0-C\frac{M(0)}{q_0}\right)^{1-\frac2{p}}\\
&\geq cp_0q_0^{\frac{2-p}{p}}\\
&\geq cp_0.
\end{align*}
By Lemma \ref{le01}, one has for $\sigma\in (s-\frac1p+\frac12,\fr32)$
\bbal
\|u_0\|_{B^s_{p,r}}\leq C\|u_0\|_{B^{s-\frac1p+\frac12}_{2,r}}\leq C\|u_0\|_{H^\sigma}\leq C_1p_0q^{\frac32-\sigma}_0\leq \delta,
\end{align*}
but for $T_0\in[0,T_{\rm{min}})$
\bbal
\|u(T_{0})\|_{B^s_{p,r}}\geq \|u(T_{0})\|_{W^{1,p}}
&\geq c_2\sqrt{p_0}\geq \frac{1}{\delta},
\end{align*}
if some large $p_0$ and small $q_0$ is chosen. This completes the proof of Theorem \ref{th}.

\section*{Acknowledgments}
J. Li is supported by the National Natural Science Foundation of China (11801090 and 12161004) and Jiangxi Provincial Natural Science Foundation (20212BAB211004). Y. Yu is supported by the National Natural Science Foundation of China (12101011) and Natural Science Foundation of Anhui Province (1908085QA05). W. Zhu is supported by the National Natural Science Foundation of China (12201118) and Guangdong
Basic and Applied Basic Research Foundation (2021A1515111018).

\section*{Data Availability} No data was used for the research described in the article.

\section*{Conflict of interest}
The authors declare that they have no conflict of interest.
\addcontentsline{toc}{section}{References}

\end{document}